\newtheorem{thm}{Theorem}[section]
\newtheorem{lemma}[thm]{Lemma}
\theoremstyle{remark} 
\newtheorem{rem}[thm]{Remark}
 \crefname{rem}{Remark}{Remarks}
 \Crefname{rem}{Remark}{Remarks}
\newtheorem{example}[thm]{Example}
\theoremstyle{definition} 
\newtheorem{definition}[thm]{Definition} 
\titleformat*{\section}{\normalsize \bfseries \filcenter}
\titleformat*{\subsection}{\normalsize \bfseries }
\Crefname{mainthm}{Theorem}{Theorems}
\Crefname{maincor}{Corollary}{Corollaries}
\Crefname{mainquestion}{Question}{Questions}
\Crefname{mainconj}{Conjecture}{Conjectures}
\def\namedlabel#1#2{\begingroup
   \def\@currentlabel{#2}
   \label{#1}\endgroup
}
\DeclareFontFamily{U}{mathx}{}
\DeclareFontShape{U}{mathx}{m}{n}{<-> mathx10}{}
\DeclareSymbolFont{mathx}{U}{mathx}{m}{n}
\DeclareMathAccent{\widehat}{0}{mathx}{"70}
\DeclareMathAccent{\widecheck}{0}{mathx}{"71}
\title{\normalsize \textbf{A short computation of the Rouquier dimension for a cycle of projective lines}}
\author{\normalsize Andrew Hanlon, Jeff Hicks}
\date{}
\newcommand{\RR}{\mathbb R}
\renewcommand{\AA}{\mathbb A}
\newcommand{\NN}{\mathbb N}
\newcommand{\PP}{\mathbb P}
\DeclareMathOperator{\ads}{ads}
\newcommand{\tensor}{\otimes}
\newcommand{\core}{\mathfrak{c}}
\newcommand{\gentime}{\text{\ClockLogo}}
\DeclareMathOperator{\Ddim}{Ddim}
\DeclareMathOperator{\Perf}{Perf}
\DeclareMathOperator{\id}{id}
\DeclareMathOperator{\cone}{cone}
\DeclareMathOperator{\Hom}{Hom}
\DeclareMathOperator{\Coh}{Coh}
\DeclareMathOperator{\st}{\; |\; }
\DeclareMathOperator{\Fuk}{Fuk}
\DeclareMathOperator{\Spec}{Spec}
\DeclareMathOperator{\Rdim}{Rdim}
\begin{document}

\maketitle
\begin{abstract}
	Given a dg category $\mathcal C$, we introduce a new class of objects (weakly product bimodules) in $\mathcal C^{op}\otimes \mathcal C$ generalizing product bimodules. We show that the minimal generation time of the diagonal by weakly product bimodules provides an upper bound for the Rouquier dimension of $\mathcal C$. As an application, we give a purely algebro-geometric proof of a result of \citeauthor{burban2017derived} that the Rouquier dimension of the derived category of coherent sheaves on an $n$-cycle of projective lines is one. Our approach explicitly gives the generator realizing the minimal generation time.
\end{abstract}

\section{Introduction}
The Rouquier dimension of a triangulated category $\mathcal C$ is the minimal generation time of the category by any object (see \cref{def:rdim} for a precise statement). One setting that has attracted significant interest comes from algebraic geometry, where Orlov conjectured that $\Rdim(D^b\Coh(X))= \dim(X)$ for any smooth quasi-projective variety $X$ \cite{orlov2009remarks}. Orlov's conjecture is known for a handful of examples and special classes of smooth varieties. When $X$ is a singular variety, there are known bounds on the Rouquier dimension in some circumstances. 
Both \cite{bai2023rouquier} and \cite{dey2023strong} provide summaries of known cases and bounds.
In this paper, we study $D^b\Coh(I_n)$ where $I_n$ is an $n$-cycle of projective lines.
In previous work, the Rouquier dimension of $D^b\Coh(I_n)$ was computed via comparison to triangulated categories that are not immediately related to algebraic geometry:
\begin{itemize}
	\item \citeauthor{burban2017derived} computed that the Rouquier dimension of the bounded derived category of a finite-dimensional module over a gentle or skew-gentle algebra is at most one \cite{burban2017derived}. Since these categories are categorical resolutions of the bounded derived category of coherent sheaves on a cycle of projective lines, \Citeauthor{burban2017derived} obtain as a corollary that $\Rdim(D^b\Coh(I_n))=1$. They also exhibited a minimal time generator for the example of a nodal Weierstrass cubic.
	\item Let $\check X$ be a real 2-dimensional Liouville domain. \Citeauthor{bai2023rouquier} showed that the Rouquier dimension of the wrapped Fukaya category $\mathcal{W}(\check X)$ of $\check X$ is at most one \cite{bai2023rouquier}.  By applying their work to $\check X$, the homological mirror of $I_n$ (satisfying  $\mathcal{W}(I_n)$ and $\Fuk(\check X)$ are derived equivalent \cite{lekili2020homological}) we immediately obtain that $\Rdim(D^b\Coh(I_n))=1$.
\end{itemize}

In this paper, we prove that $\Rdim(D^b\Coh(I_n))=1$ directly using algebro-geometric techniques. We provide an object that generates the diagonal $\mathcal O_\Delta\in D^b\Coh(I_n\times I_n)$ in time one and show that this generation time bounds the Rouquier dimension. Our approach explicitly identifies a generator $G$ satisfying $\gentime_G(D^b\Coh(I_n))=1$.

\subsection*{Outline}
In \cref{sec:symplectic}, we informally summarize our perspective of how minimal length cellular resolutions in algebraic geometry are related to Morse theory and symplectic geometry through homological mirror symmetry.
Outside of that section, the paper requires no background in symplectic geometry, and no other section is dependent on \cref{sec:symplectic}.
In \cref{sec:weakDiagonalDimension}, we recall the notions of Rouquier and diagonal dimension of a dg (or $A_\infty$) category $\mathcal C$, and we introduce a new class of objects (the weakly product bimodules, \cref{def:weaklyProduct}) in $\mathcal C^{op}\tensor \mathcal C$.
The main observation in this paper is that the Rouquier dimension is bounded by the minimal generation time of the diagonal by weakly product bimodules as shown in \cref{thm:weakProductToRdim}. 
We show in \cref{lem:makingWeaklyProduct} that the mapping cone of a product morphism between product bimodules in $\mathcal C^{op}\otimes \mathcal C$ is a weakly product bimodule.

With these algebraic tools in hand, we conclude the paper with \cref{sec:examples}, where we explicitly generate the diagonal in minimal time by weakly product bimodules for the toy cases of the affine line, the nodal conic (\cref{lem:resolutionNodalConic}), and the cycle of projective lines $I_n$ (\cref{thm:resIn}).

\subsection*{Acknowledgements}
The authors thank Alexey Elagin, Daniel Erman, Pat Lank, and Oleg Lazarev for insightful conversations. We are also grateful to the anonymous referee for helpful comments that improved the paper. The second author is supported by EPSRC Grant EP/V049097/1 (Lagrangians from Algebra and Combinatorics). This project arose from a problem session from the workshop ``Syzygies and Mirror Symmetry'' hosted by the American Institute of Mathematics. Many of the examples in this paper were initially worked out using Macaulay2 \cite{M2}.

\section{Resolutions of the diagonal from the perspective of symplectic geometry}
\label{sec:symplectic}
The \emph{diagonal dimension} of a dg category $\mathcal C$ is the minimal generation time of the diagonal bimodule via product objects (see \cref{def:ddim} for the precise definition).
The relation between resolving the diagonal and generation of $\mathcal C$ goes back to Beilinson and Kontsevich.
This can be refined to an upper bound on the Rouquier dimension by the diagonal dimension (see, for instance, \cite{rouquier2008dimensions,bondal2003generators};  we use conventions matching \cite{bai2023rouquier,hanlon2023relating}). In addition, one can computationally check the generation time of the diagonal by exhibiting a twisted complex that is quasi-isomorphic to the diagonal.
\begin{figure}[h]
	\centering
	\begin{subfigure}[t]{.4\linewidth}
		\centering
		\begin{tikzpicture}

\begin{scope}[decoration={
    markings,
    mark=at position 0.5 with {\arrow{>}}}
    ] 

\fill[gray!20]  (-1.5,-3) rectangle (-4.5,1);

\draw[dashed, postaction={decorate}] (-4.5,1) -- (-1.5,1);
\draw[dashed, postaction={decorate}](-4.5,-3) -- (-1.5,-3);

\draw[dotted] (-4.5,1) -- (-4.5,-3);

\draw (-1.5,-1.5) -- (-1.5,1) (-1.5,-3) -- (-1.5,-2.5);
\clip  (-1.5,-3) rectangle (-4.5,1);

\draw[dotted, fill=white]  (-1.5,-2) ellipse (0.5 and 0.5);

\draw[blue] (-4,1) .. controls (-4,0.5) and (-4,0.5) .. (-3.5,0) .. controls (-3,-0.5) and (-3,-0.5) .. (-3,-1) .. controls (-3,-1.5) and (-3,-1.5) .. (-3.5,-2) .. controls (-4,-2.5) and (-4,-2.5) .. (-4,-3);
\draw[blue] (-3,-1) .. controls (-2.5,-1) and (-2.5,-2) .. (-2,-2);

\end{scope}
\draw[red] (-3.5,1) -- (-3.5,-3);
\draw[red] (-3.5,0.5) .. controls (-3,0.5) and (-2,-0.5) .. (-1.5,-0.5);
\node[circle, fill, scale=.2] at (-3.5,0) {};
\node[circle, fill, scale=.2] at (-3.5,-2) {};
\node[left] at (-3.5,-2) {$1$};
\node[left] at (-3.5,0) {$0$};
\end{tikzpicture} 		\caption{Perturbation of the skeleton of $T^*S^1$ with one stop. The labels are the values of $H$.}
		\label{fig:skelA1a}
	\end{subfigure}
	\hspace{.1\linewidth}
	\begin{subfigure}[t]{.4\linewidth}
		\centering
		\begin{tikzpicture}

\begin{scope}[decoration={
    markings,
    mark=at position 0.5 with {\arrow{>}}}
    ] 

\fill[gray!20]  (-1.5,-3) rectangle (-4.5,1);

\draw[dashed, postaction={decorate}] (-4.5,1) -- (-1.5,1);
\draw[dashed, postaction={decorate}](-4.5,-3) -- (-1.5,-3);

\draw[dotted] (-4.5,1) -- (-4.5,-3);

\draw (-1.5,-1.5) -- (-1.5,1) (-1.5,-3) -- (-1.5,-2.5);
\clip  (-1.5,-3) rectangle (-4.5,1);
\def\int{1}

\node (v5) at (-4.5,-1.5) {};
\node (v2) at (-3.5,0) {$\mathcal O_{\mathbb A^1\times \mathbb A^1}$};
\node (v3) at (-3.5,-2) {$\mathcal O_{\mathbb A^1\times \mathbb A^1}$};

\draw (v3) edge[->]  node[fill=gray!20]{ $x_1$} (v2);
\draw   (-3.5,-3) edge node[fill=gray!20]{$-x_2$} (v3);

\draw  (v2) edge[<-] (-3.5,1);

\draw[dotted, fill=white]  (-1.5,-2) ellipse (0.5 and 0.5);

\end{scope}
\end{tikzpicture} 		\caption{A resolution of the diagonal in $\AA^1\times \AA^1$ by product objects}
		\label{fig:resA1a}
	\end{subfigure}
	\caption{To translate from symplectic to algebraic geometry: $T^*S^1$ with one stop is homologically mirror to $\AA^1$. At each intersection point $q\in \phi_H(\core_X)\cap \core_X$, we take the product of the sheaves mirror to the cocores of $q\in \core_X$ and $q\in \phi_H(\core_X)$.  We then order the terms by the value of the Hamiltonian.}
	\label{fig:A1a}
\end{figure}
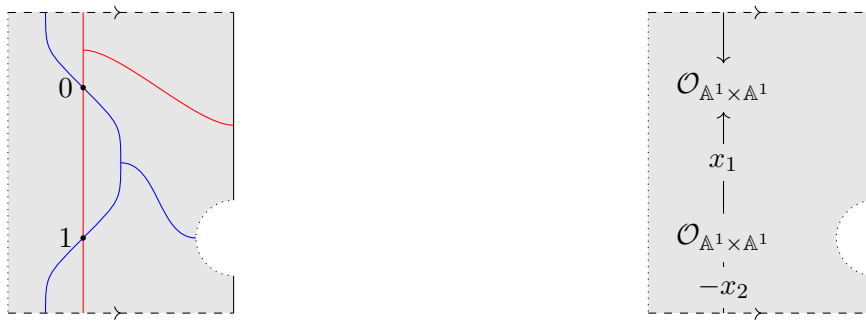
For example, the diagonal in $\AA^1\times \AA^1$ is easily seen to have a length one resolution by product objects :
\begin{equation}
	\mathcal O_{\AA^1\times \AA^1}\xleftarrow{x_1-x_2} \mathcal O_{\AA^1\times \AA^1}.
	\label{eq:resDiagA1}
\end{equation}

There are two difficulties with taking this approach to understanding the Rouquier dimension of $\mathcal{C}$. The first is producing a short resolution of the diagonal. For this, we take inspiration from symplectic geometry. In \cite{hanlon2023relating}, it was shown that when $\check X$ is a Weinstein domain, short resolutions of the diagonal of the wrapped Fukaya category $\mathcal{W}(\check X)$ can be constructed in the following fashion. Let $\core_X$ denote a skeleton of $\check X$ and let $\phi_H: \check X\to \check X$ be the time 1-Hamiltonian flow of $H: \check X\to \RR$. If the intersection $\phi_H(\core_X)\cap \core_X$ is \emph{transverse}, the diagonal bimodule for $\mathcal{W}(\check X)$ has a resolution by product objects of length $|H(\phi_H(\core_X)\cap \core_X)|$. Furthermore, the terms of the resolution can be read off from the intersection points $\phi_H(\core_X)\cap \core_X$. One excessively roundabout way to obtain \cref{eq:resDiagA1} is to use this strategy on the mirror space (see \cref{fig:skelA1a}). By translating back across the mirror, we see that \cref{eq:resDiagA1} has the structure of a cellular resolution (see \cref{fig:resA1a}). In \cite{hanlon2023resolutions}, this translation method was used to produce a short resolution of the diagonal of a smooth toric variety by line bundles.

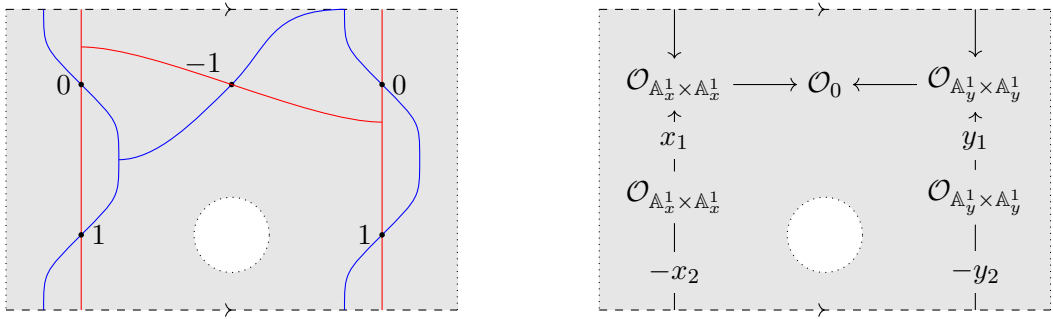
\begin{figure}[h]
	\centering
	\begin{subfigure}[t]{.47\linewidth}
		\centering
		\begin{tikzpicture}

\begin{scope}[decoration={    markings,
    mark=at position 0.5 with {\arrow{>}}}    ] 

\fill[gray!20]  (1.5,-3) rectangle (-4.5,1);

\draw[dashed, postaction={decorate}] (-4.5,1) -- (1.5,1);
\draw[dashed, postaction={decorate}](-4.5,-3) -- (1.5,-3);

\draw[dotted] (-4.5,1) -- (-4.5,-3);

\draw[dotted] (1.5,-3) -- (1.5,1) ;
\clip  (1.5,-3) rectangle (-4.5,1);

\draw[dotted, fill=white]  (-1.5,-2) ellipse (0.5 and 0.5);

\end{scope}

\begin{scope}[shift={(4,0)}]

\draw[blue] (-4,1) .. controls (-4,0.5) and (-4,0.5) .. (-3.5,0) .. controls (-3,-0.5) and (-3,-0.5) .. (-3,-1) .. controls (-3,-1.5) and (-3,-1.5) .. (-3.5,-2) .. controls (-4,-2.5) and (-4,-2.5) .. (-4,-3);

\end{scope}

\draw[blue] (-4,1) .. controls (-4,0.5) and (-4,0.5) .. (-3.5,0) .. controls (-3,-0.5) and (-3,-0.5) .. (-3,-1) .. controls (-3,-1.5) and (-3,-1.5) .. (-3.5,-2) .. controls (-4,-2.5) and (-4,-2.5) .. (-4,-3);

\draw[red] (-3.5,-3) -- (-3.5,1) (0.5,-3) -- (0.5,1) (-3.5,0.5);
\draw[blue] (-3,-1) .. controls (-2.5,-1) and (-2,-0.5) .. (-1.5,0) .. controls (-1,0.5) and (-1,1) .. (0,1);

\node[circle, fill, scale=.2] at (-3.5,0) {};
\node[circle, fill, scale=.2] at (-3.5,-2) {};

\node[circle, fill, scale=.2] at (0.5,0) {};

\node[circle, fill, scale=.2] at (-1.5,0) {};
\node[circle, fill, scale=.2] at (0.5,-2) {};
\node[right] at (-3.5,-2) {$1$};
\node[left] at (0.5,-2) {$1$};
\node[right] at (0.5,0) {$0$};
\node[left] at (-3.5,0) {$0$};
\node[above left] at (-1.5,0) {$-1$};
\draw[red] (-3.5,0.5) .. controls (-2.5,0.5) and (-0.5,-0.5) .. (0.5,-0.5);
\end{tikzpicture}
 		\caption{Perturbed skeleta of the pair of pants.}
		\label{fig:skelnodalConicSlow}
	\end{subfigure}
	\begin{subfigure}[t]{.47\linewidth}
		\centering
		\begin{tikzpicture}

\begin{scope}[decoration={    markings,
    mark=at position 0.5 with {\arrow{>}}}    ] 

\fill[gray!20]  (1.5,-3) rectangle (-4.5,1);

\draw[dashed, postaction={decorate}] (-4.5,1) -- (1.5,1);
\draw[dashed, postaction={decorate}](-4.5,-3) -- (1.5,-3);

\draw[dotted] (-4.5,1) -- (-4.5,-3);

\draw[dotted] (1.5,-3) -- (1.5,1) ;
\clip  (1.5,-3) rectangle (-4.5,1);

\node (v5) at (-4.5,-1.5) {};
\begin{scope}[]
\node (v2) at (-3.5,0) {$\mathcal O_{\mathbb A_x^1\times \mathbb A_x^1}$};
\node (v3) at (-3.5,-1.5) {$\mathcal O_{\mathbb A^1_x\times \mathbb A^1_x}$};
\draw   (-3.5,-3) edge (v3);
\draw  (v2) edge[<-] (-3.5,1);
\node[fill=gray!20] at (-3.5,-2.5) {$-x_2$};

\end{scope}

\draw (v3) edge[->] node[fill=gray!20] {$x_1$}(v2);
\node (v6) at (-1.5,0) {$\mathcal O_{0}$};

\draw[<-]  (v6) edge (v2);
\begin{scope}[shift={(4,0)}]
\node (v2) at (-3.5,0) {$\mathcal O_{\mathbb A^1_y \times \mathbb A^1_y}$};
\node (v3) at (-3.5,-1.5) {$\mathcal O_{\mathbb A^1_y\times \mathbb A^1_y}$};
\draw   (-3.5,-3) edge (v3);
\draw  (v2) edge[<-] (-3.5,1);
\node[fill=gray!20] at (-3.5,-2.5) {$-y_2$};

\end{scope}

\draw (v3) edge[->] node[fill=gray!20] {$y_1$}(v2);
\draw[<-]  (v6) edge (v2);

\draw[dotted, fill=white]  (-1.5,-2) ellipse (0.5 and 0.5);

\end{scope}
\end{tikzpicture}
 		\caption{Diagonal resolution of $V(xy)$ by prod. objects.}
		\label{fig:resNodalConicSlow}
	\end{subfigure}
	\caption{The pair of pants is homologically mirror to the singular conic.}
 \label{fig:nodalConicSlow}
\end{figure}
The second difficulty is that product objects are slightly too rigid to quickly generate the diagonal in some cases. For example, consider the symplectic pair of pants, which is homologically mirror to $V(xy)$, the nodal conic. The shortest resolution of the diagonal by product bimodules we could find by this method is exhibited in \cref{fig:nodalConicSlow}, which does no better than the trivial bound one obtains from pulling back the resolution of the diagonal for the space $\AA^1_x\times \AA^1_y$!
\begin{figure}[h]
	\centering
	\begin{subfigure}[t]{.4\linewidth}
		\centering
		\begin{tikzpicture}

    \begin{scope}[decoration={
        markings,
        mark=at position 0.5 with {\arrow{>}}}
        ] 
    
    \fill[gray!20]  (-1.5,-3) rectangle (-4.5,1);
    
    \draw[dashed, postaction={decorate}] (-4.5,1) -- (-1.5,1);
    \draw[dashed, postaction={decorate}](-4.5,-3) -- (-1.5,-3);
    
    \draw[dotted] (-4.5,1) -- (-4.5,-3);
    
    \draw (-1.5,-1.5) -- (-1.5,1) (-1.5,-3) -- (-1.5,-2.5);
    \clip  (-1.5,-3) rectangle (-4.5,1);

    \draw[dotted, fill=white]  (-1.5,-2) ellipse (0.5 and 0.5);

    \draw[blue] (-3.75,1) .. controls (-3.75,0.5) and (-3.75,0.5) .. (-3.5,0) .. controls (-3.25,-0.5) and (-3.25,-0.5) .. (-3.25,-1) .. controls (-3.25,-1.5) and (-3.25,-1.5) .. (-3.5,-2) .. controls (-3.75,-2.5) and (-3.75,-2.5) .. (-3.75,-3);

    \end{scope}
    \draw[red] (-3.5,1) -- (-3.5,-3);
    \draw[red] (-3.5,0) .. controls (-3,0) and (-2.5,0) .. (-1.5,0);
    \draw[blue] (-3.5,0) .. controls (-3,0.5) and (-2.5,0.5) .. (-2,0) .. controls (-1.5,-0.5) and (-2.5,-2) .. (-2,-2);
    \node[fill=black, scale=.2, circle] at (-3.5,-2) {};
\node[fill=black, scale=.2, circle] at (-3.5,0) {};
\node[fill=black, scale=.2, circle] at (-2,0) {};
\node[left] at (-3.5,-2) {$1$};
\node[left] at (-3.5,0) {$0$};
\node[above right] at (-2,0) {$1$};
\end{tikzpicture} 		\caption{A perturbation of the skeleton whose intersections are not transverse.}
		\label{fig:skelA1b}
	\end{subfigure}
	\hspace{.1\linewidth}
	\begin{subfigure}[t]{.4\linewidth}
		\centering
		\begin{tikzpicture}

\begin{scope}[decoration={    markings,
    mark=at position 0.5 with {\arrow{>}}}    ] 

\fill[gray!20]  (-1.5,-3) rectangle (-4.5,1);

\draw[dashed, postaction={decorate}] (-4.5,1) -- (-1.5,1);
\draw[dashed, postaction={decorate}](-4.5,-3) -- (-1.5,-3);

\draw[dotted] (-4.5,1) -- (-4.5,-3);

\draw (-1.5,-1.5) -- (-1.5,1) (-1.5,-3) -- (-1.5,-2.5);
\clip  (-1.5,-3) rectangle (-4.5,1);
\def\int{1}

\node (v5) at (-4.5,-1.5) {};
\node (v2) at (-3.5,0) {$\mathcal I_0$};
\node (v3) at (-3.5,-1.75) {$\mathcal O_{\mathbb A^1\times \mathbb A^1}$};

\node (v6) at (-2,0) {$\mathcal O_{0}[1]$};
\draw (v3) edge[->] node[fill=gray!20] {$\phi^{x_1}$}(v2);
\draw   (-3.5,-3) edge (v3);
\draw[->]  (v6) edge  node[fill=gray!20]{$\delta$} (v2);

\draw  (v2) edge[<-] (-3.5,1);

\draw[dotted, fill=white]  (-1.5,-2) ellipse (0.5 and 0.5);

\node[fill=gray!20] at (-3.5,-2.5) {$-\phi^{x_2}$};
\end{scope}
\end{tikzpicture} 		\caption{Generation of the diagonal in $\AA^1\times \AA^1$ by weakly product bimodules}
		\label{fig:resA1b}
	\end{subfigure}
	\caption{When $\phi_H(\core_X)$ and $\core_X$ do not intersect transversely, we still explicitly generate the diagonal (but not by product objects). The generation is good enough to bound $\Rdim(\Fuk(\check X))$.}
	\label{fig:A1b}
\end{figure}
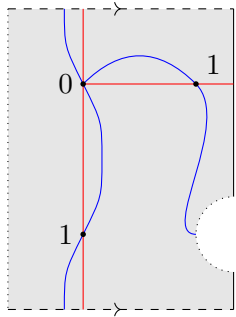
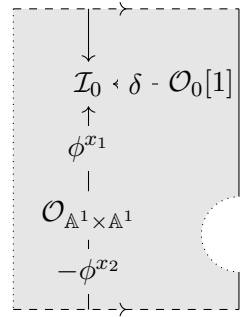
To get around this, we again take inspiration from symplectic geometry and attempt to apply  \cite{hanlon2023relating} to perturbations that result in non-transverse intersections such as the perturbation drawn in \cref{fig:skelA1b}. In good cases, the results of \cite{hanlon2023relating} imply that these perturbations correspond to resolutions of the diagonal, but not necessarily into product objects. There is not an obvious way to characterize these objects symplectically.

Here, we take this inspiration and use it to perform computations in algebraic geometry in a basic singular example. While we do use intuition from symplectic geometry to exhibit a complex of objects in $D^b\Coh(I_n)$, the actual check that this complex (as in \cref{fig:resA1b}) is a short derived resolution of the diagonal is a purely algebraic computation. In our examples, there is a natural class of objects that correspond to these non-transverse intersections, which behave like product objects for the aspect of bounding Rouquier dimension.

\section{Weakly product bimodules and diagonal generation}
We now introduce weakly product bimodules and show that the generation time of the diagonal by weakly product bimodules bounds the Rouquier dimension.
\label{sec:weakDiagonalDimension}
\subsection{Background: Rouquier and diagonal dimension}
We use the notation in \cite{hanlon2023relating}: let $\mathcal C$ be a dg (or $A_\infty$) category over a field $k$, and let $\Perf(\mathcal C)$ be the idempotent-completed pre-triangulated closure of $\mathcal C$. All functors in this paper are implicitly derived. We will abuse notation and use an object $G$ for the full subcategory on that object.

Following \cite{rouquier2008dimensions}, given subcategories $\mathcal G_1,\mathcal  G_2$ of $\Perf \mathcal C$, let
$\mathcal{G}_1 * \mathcal{G}_2$ be the full subcategory of objects $G_3=\cone(G_1\to G_2)$ with $G_1\in \mathcal G_1, G_2\in \mathcal  G_2$.
The idempotent closure of $\mathcal{G}\subset \Perf \mathcal{C}$, denoted by $\langle \mathcal{G} \rangle$, is the smallest full subcategory containing $\mathcal G$ and closed under quasi-isomorphisms, direct summands, shifts, and finite direct sums. Let $\ads(\mathcal G)$ be the smallest full subcategory containing $\mathcal G$ and closed under (possibly infinite) direct sums, summands, and shifts.

Given a full subcategory $\mathcal G$ of $\Perf \mathcal C$, we inductively build full subcategories
\[\langle \mathcal G \rangle_0, \langle \mathcal G \rangle_1, \langle G \rangle_2, \ldots \]
by the following process.
For the base case, set:
$\langle \mathcal G \rangle_{0}=0$
and subsequently define
\[\langle \mathcal{G} \rangle_k = \langle \langle \mathcal{G} \rangle_{k-1} * \langle \mathcal{G} \rangle \rangle.\]
We then define the generation time of $\mathcal E\subset\Perf(\mathcal C)$ by $\mathcal G$  to be
\[\gentime_{\mathcal{G}} (\mathcal{E}) = \min \left(\left\{ k\in \NN \st \mathcal{E} \subset \langle \mathcal G \rangle_{k+1} \right\}\cup\{+\infty\} \right).\]
\begin{definition}[Rouquier dimension] \label{def:rdim}
	Let $\mathcal C$ be a dg (or $A_\infty$) category over a field $k$. The \emph{Rouquier dimension} of $\mathcal C$ is
	\[\Rdim(\mathcal C):=\min\left( \{k \st \exists G\in \Perf(\mathcal C) , \gentime_G(\mathcal C)=k\}\cup \{+\infty\}\right).\]
\end{definition}

Let $\mathcal B$ and $\mathcal C$ be dg categories. Given a bimodule $K\in \mathcal B^{op}\times \mathcal C$ and $F\in \mathcal B$ we define the convolution
\begin{align*}
	\Phi_K: \mathcal B\to \mathcal C &  & F\mapsto K\tensor_{\mathcal B} F.
\end{align*}
Denote by $\Delta\in \mathcal C^{op}\otimes \mathcal C$ the diagonal bimodule $\Delta(F_1, F_2) = \Hom^\bullet(F_1, F_2)$; this satisfies the property that $\Phi_\Delta F\cong F$. For objects $G_1, G_2\in \mathcal C$, denote by $G_1\boxtimes G_2$ the product bimodule. The associated convolution is
\begin{equation}
	\Phi_{G_1\boxtimes G_2}(F)\cong H^\bullet(G_1\tensor F)\tensor G_2.
	\label{eq:prodIsWeakProd}
\end{equation}
Let $\mathcal P\subset \mathcal C^{op}\otimes \mathcal C$ denote the idempotent closure of the full subcategory of product bimodules.

\begin{definition} \label{def:ddim}
	The diagonal dimension of a dg category $\mathcal C$ is
	\[\Ddim(\mathcal C):=\min \left( \{ k \st \exists G \in \mathcal P, \gentime_G(\Delta)=k\} \cup \{+\infty\} \right).\]
\end{definition}
It is well-established that  $\Rdim(\mathcal C) \leq \Ddim(\mathcal C)$ (see, for instance, \cite{elagin2021three}).
\subsection{Weakly product bimodules}

Requiring the objects in $\mathcal{C}^{op} \times \mathcal{C}$ that generate the diagonal to be product bimodules is stronger than necessary for applying the usual argument to bound the Rouquier dimension.
\begin{definition}
	\label{def:weaklyProduct}
	We say that $ G_{12}\in \mathcal C^{op}\times \mathcal C$ is a \emph{weakly product bimodule} if there exists an object $ G_2\in \mathcal C$ so that for all $ F \in \mathcal C$ we have
	\[   \Phi_{G_{12}}( F) \in  \ads(  G_2).\]
\end{definition}
Observe that every product bimodule $ G_{12}= G_1\boxtimes  G_2$ is a weakly product bimodule by \cref{eq:prodIsWeakProd} and that summands and shifts of weakly product bimodules are weakly product bimodules.
For this reason, it makes sense to denote by $\mathcal {WP}\subset \mathcal C^{op}\otimes \mathcal C$ the idempotent closed subcategory of weakly product bimodules, which contains $\mathcal P$. We will now construct some objects of $\mathcal {WP}$ from certain mapping cones in $\mathcal P$.
\begin{lemma}
	Let $G_1\boxtimes G_2$ and $G'_1\boxtimes G'_2$ be product bimodules.
	Let $\phi_i: G_i\to G'_i$ be morphisms. Then $G''_{12}:= \cone(\phi_1\boxtimes \phi_2)$ is weakly product.
	\label{lem:makingWeaklyProduct}
\end{lemma}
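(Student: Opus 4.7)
The plan is to show, for each $F \in \mathcal C$, that $\Phi_{G''_{12}}(F)$ lies in $\ads(W)$ for a single fixed object $W$ independent of $F$ --- this is exactly the content of \cref{def:weaklyProduct}. Because convolution is exact,
\[
	\Phi_{G''_{12}}(F) \cong \cone\bigl(\Phi_{G_1\boxtimes G_2}(F) \to \Phi_{G'_1\boxtimes G'_2}(F)\bigr),
\]
and by \cref{eq:prodIsWeakProd} this is the cone of a map
\[
	\alpha_F \otimes \phi_2 \colon H^\bullet(G_1\otimes F)\otimes G_2 \longrightarrow H^\bullet(G'_1\otimes F) \otimes G'_2,
\]
where $\alpha_F := H^\bullet(\phi_1 \otimes \id_F)$ is a map of graded $k$-vector spaces.

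The next step is a linear-algebraic reduction. Writing $V := H^\bullet(G_1\otimes F)$ and $V' := H^\bullet(G'_1\otimes F)$, the graded map $\alpha_F$ splits as a sum over degrees $i$ of $k$-linear maps $\alpha_F^i \colon V^i \to V'^i$. Choosing bases adapted to $\ker \alpha_F^i$ and to $\mathrm{im}\,\alpha_F^i$ brings each $\alpha_F^i$ into the block form $\bigl(\begin{smallmatrix}I_{r_i} & 0 \\ 0 & 0\end{smallmatrix}\bigr)$ with $r_i = \mathrm{rank}\,\alpha_F^i$, and these vector-space isomorphisms tensor up to dg isomorphisms of $V\otimes G_2$ and $V'\otimes G'_2$ that intertwine $\alpha_F\otimes\phi_2$ with this block-diagonal model.

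Reading off the cone of the block model degree by degree then expresses $\Phi_{G''_{12}}(F)$ as a direct sum whose summands are shifts of $\cone(\phi_2)$ (one for each identity block), shifts of $G_2$ (one for each basis vector in $\ker\alpha_F^i$), and shifts of $G'_2$ (one for each basis vector complementing $\mathrm{im}\,\alpha_F^i$). Thus $\Phi_{G''_{12}}(F)\in\ads\bigl(G_2 \oplus G'_2 \oplus \cone(\phi_2)\bigr)$ for every $F$, witnessing $G''_{12}$ as a weakly product bimodule with associated object $G''_2 := G_2 \oplus G'_2 \oplus \cone(\phi_2)$.

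The anticipated obstacle is bookkeeping rather than substantive mathematics: one has to verify carefully that the scalar change-of-basis matrices on $V$ and $V'$ really do lift through the tensor product to honest dg isomorphisms intertwining $\alpha_F\otimes\phi_2$ with its block form, and that one may treat $H^\bullet(G_1\otimes F)$ as a formal (zero-differential) graded vector space when computing the cone. Closure of $\ads$ under possibly infinite direct sums and summands means that no issue arises if the cohomology groups $V^i$, $V'^i$ happen to be infinite-dimensional.
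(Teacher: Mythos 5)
Your argument is essentially identical to the paper's: both exploit exactness of convolution to reduce to the cone of $H^\bullet(\phi_1\otimes\id)\otimes\phi_2$, row-reduce the graded linear map $H^\bullet(\phi_1\otimes\id)$ into an identity-plus-zero block form, and read off the cone as a direct sum of (shifts of) $G_2$, $\cone(\phi_2)$, and $G'_2$, landing on the same witnessing object $G''_2 = G_2\oplus\cone(\phi_2)\oplus G'_2$. The only cosmetic difference is that you carry out the block decomposition degree by degree, whereas the paper packages it into a single graded splitting $H^\bullet(G_1\otimes F)=V\oplus U$, $H^\bullet(G'_1\otimes F)=V\oplus W$; the content is the same.
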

\begin{proof}
	Using that all of our functors are triangulated, for any object $F\in \mathcal C$
	\begin{align*}
		\Phi_{G''_{12}}(F)\cong & \cone\left(H^\bullet(G_1\tensor F)\tensor G_2\xrightarrow{H^\bullet(\phi_1\tensor \id )\tensor \phi_2}H^\bullet(G'_1\tensor F)\tensor G'_2\right) \\
		\intertext{Since $H^\bullet(\phi_1\tensor \id )$ is a graded map of graded vector spaces, we can row reduce to obtain decompositions $H^\bullet(G_1\tensor F)=V\oplus U$ and $H^\bullet(G'_1\tensor F)=V\oplus W$  so that
		}
		\Phi_{G''_{12}}(F)\cong                   & \cone\left((V\oplus U)\tensor G_2 \xrightarrow{\begin{pmatrix}\id_V\tensor \phi_2 & 0\\0& 0 \end{pmatrix}} (V\oplus W)\tensor G'_2\right)         \\
		\cong                   & (U\tensor G_2)\oplus (V\tensor \cone(\phi_2)) \oplus (W\tensor G'_2).
	\end{align*}
	Therefore the image of $\Phi_{G''_{12}}$ is contained within $\ads(G''_2)$ where $G''_2=G_2\oplus \cone(\phi_2)\oplus G'_2$.
\end{proof}
\begin{example}
	There are more ways to build weakly product bimodules than \cref{lem:makingWeaklyProduct}.
	For example, suppose that we have the following commutative diagram:
	\[
		\begin{tikzcd}
			G_1\otimes G_2 \arrow{r}{g_1\otimes g_2} \arrow{d}{\phi_1\otimes \phi_2} & G_1'\otimes G_2' \arrow{d}{\phi_1'\otimes \phi_2'}  \arrow{r} & \cone(g_1\otimes g_2) \arrow{d}{\phi}\\
			H_1\otimes H_2 \arrow{r}{h_1\otimes h_2} &  H_1'\otimes H_2' \arrow{r} & \cone(h_1\otimes h_2)\arrow{d}\\ 
			 & & \cone(\phi)
		\end{tikzcd}
	\]
Then, the object $\cone(\phi)$ is weakly product by a similar argument to \cref{lem:makingWeaklyProduct}.
\end{example}
\subsection{Bounding the Rouquier dimension}
Weakly product bimodules remain useful for bounding the generation time.
\begin{thm}
	\label{thm:weakProductToRdim}
	Let $G_{12}$ be a weakly product bimodule and suppose $\gentime_{G_{12}}(\Delta)=k$. Then $\gentime_{G_2}(\mathcal C)\leq k$. In particular, $\Rdim(\mathcal C)\leq k$. 
\end{thm}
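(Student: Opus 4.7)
The strategy is to transport the generation of $\Delta$ by $G_{12}$ in $\mathcal{C}^{op}\otimes \mathcal{C}$ down to a generation of $\mathcal{C}$ by $G_2$ via the convolution functor. For any $F\in \mathcal{C}$, consider
\[ \Phi_{-}(F)\colon \mathcal{C}^{op}\otimes \mathcal{C} \longrightarrow \Perf(\mathcal{C}), \qquad K \longmapsto \Phi_K(F). \]
As a derived tensor product, this functor is triangulated and commutes with shifts, direct sums, and direct summands.

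The central step is to establish by induction on $j\geq 0$ that $\Phi_{-}(F)$ sends $\langle G_{12}\rangle_j$ into $\langle \Phi_{G_{12}}(F)\rangle_j$. The base case $j=0$ is trivial; for the inductive step, every object of $\langle G_{12}\rangle_j = \langle \langle G_{12}\rangle_{j-1} * \langle G_{12}\rangle\rangle$ is assembled from $\langle G_{12}\rangle_{j-1}$ and $\langle G_{12}\rangle$ using cones (arising from $*$), shifts, finite direct sums, and direct summands, each of which is preserved by the exact functor $\Phi_{-}(F)$. Applying this with $j=k+1$ to the hypothesis $\Delta \in \langle G_{12}\rangle_{k+1}$ and using $\Phi_{\Delta}(F) \cong F$ yields $F\in \langle \Phi_{G_{12}}(F)\rangle_{k+1}$. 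Since $G_{12}$ is weakly product by assumption (\cref{def:weaklyProduct}), $\Phi_{G_{12}}(F)\in \ads(G_2)$, and hence $\Phi_{G_{12}}(F)\in \langle G_2\rangle$, so that $F\in \langle G_2\rangle_{k+1}$. This gives $\gentime_{G_2}(\mathcal{C})\leq k$, and $\Rdim(\mathcal{C})\leq k$ follows immediately from the definition.

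The step requiring the most care is the inductive compatibility of $\Phi_{-}(F)$ with the iterative closure operation $\langle \cdot \rangle_j$; although essentially formal, one must track how cones, shifts, finite sums, and splittings interact with the (derived) tensor product. A secondary subtlety is the final passage from $\Phi_{G_{12}}(F)\in \ads(G_2)$ to $\Phi_{G_{12}}(F)\in \langle G_2\rangle$: since $\Phi_{G_{12}}(F)$ is a fixed object of $\Perf(\mathcal{C})$, any decomposition witnessing membership in $\ads(G_2)$ involves only finitely many distinct shifts of $G_2$ with nonzero contribution, so $\Phi_{G_{12}}(F)$ is already a summand of a finite direct sum of shifts of $G_2$, placing it in $\langle G_2\rangle$.
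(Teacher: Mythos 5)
Your overall strategy is the same as the paper's: push a generation tower for $\Delta$ through the exact functor $\Phi_{-}(F)$ and use that $\Phi_\Delta(F)\cong F$. The paper unrolls the tower explicitly as a sequence $0\to R_0\to\cdots\to R_k$ with summand $\Delta$ of $R_k$ and $\cone(R_{i-1}\to R_i)\in\langle G_{12}\rangle$, while you phrase the same content as an induction on $j$ showing $\Phi_{-}(F)(\langle G_{12}\rangle_j)\subset\langle\Phi_{G_{12}}(F)\rangle_j$; these are equivalent.

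The one place where your argument has a genuine gap is the last step, the passage from $\Phi_{G_{12}}(F)\in\ads(G_2)$ to $\Phi_{G_{12}}(F)\in\langle G_2\rangle$. You assert that $\Phi_{G_{12}}(F)$ is a fixed object of $\Perf(\mathcal C)$, but this is not automatic: the definition of weakly product only places $\Phi_{G_{12}}(F)$ in $\ads(G_2)$, which is closed under arbitrary (possibly infinite) direct sums and so contains non-perfect objects in general; nothing in the hypotheses guarantees $\Phi_{G_{12}}(F)$ is compact. Moreover, even granting compactness, ``only finitely many distinct shifts of $G_2$ contribute'' is not the right reason to reduce to a finite sum, since $\ads$ also allows infinite multiplicities of a single shift; the correct mechanism is the factorization of the identity of a compact object through a finite subsum. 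The paper sidesteps all of this by concluding only $F\in\langle\ads(G_2)\rangle_{k+1}$ and then citing \cite[Corollary~3.14]{rouquier2008dimensions}, where the compact object in question is $F$ itself — and $F$ is manifestly in $\Perf(\mathcal C)$. You can repair your write-up with the same move: since $\langle\Phi_{G_{12}}(F)\rangle_{j}\subset\langle\ads(G_2)\rangle_j$ for all $j$, your inductive claim gives $F\in\langle\ads(G_2)\rangle_{k+1}$, and Rouquier's Corollary~3.14 (applied to the compact object $F$) yields $F\in\langle G_2\rangle_{k+1}$, i.e.\ $\gentime_{G_2}(\mathcal C)\le k$.
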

\begin{proof}
	The proof is a minor generalization of the statement that $\Rdim(\mathcal C)\leq \Ddim(\mathcal C)$. We prove that any $F$ is generated in time $k$ by $G_2$. Because $\gentime_{G_{12}}(\Delta)=k$, there exists a sequence
	\[0\to R_0\to R_1\to \cdots \to R_k\]
	with $\Delta$ a summand of $R_k$ and $G_{12}^i=\cone(R_{i-1}\to R_i)$ is an element of $\langle G_{12}\rangle$.
	Applying $\Phi_-(F)$ to this sequence (and using that the convolution functor is triangulated in both factors) yields:
	\[0 \to \Phi_{R_0}(F)\to \Phi_{R_1}(F)\to \cdots \to \Phi_{R_k}(F),\]
	where $F \cong \Phi_{\Delta}(F)$ is a direct summand of $\Phi_{R_k}(F)$. Each $\cone(\Phi_{R_{i-1}}(F)\to \Phi_{R_i}(F))$ is isomorphic to $\Phi_{G_{12}^i}(F)$. Since each $G_{12}^i$ is a direct summand of the weakly product bimodule $G_{12}$, $\Phi_{G_{12}^i}(F)\in \ads( G_2)$. This shows that $F\in \langle \ads(G_2)\rangle_k$. From \cite[Corollary 3.14]{rouquier2008dimensions}, it follows that $F\in \langle G_2\rangle_k$.
\end{proof}

\section{Examples}
In this section, we generate the diagonals of $\AA, V(xy),$ and $I_n$ with weakly product bimodules.
\label{sec:examples}
\subsection{Toy case: the affine line}
Let $\mathcal O_{\AA\times \AA}=R=k[x_1, x_2]$. Let $\mathcal O_0 = R^1/(x_1, x_2)$ be the skyscraper sheaf of the origin, and let $\mathcal I_0 = \cone(\mathcal O_{\AA\times \AA}\to \mathcal O_0)[-1]$ be the ideal sheaf of the origin. By \cref{lem:makingWeaklyProduct}, $\mathcal{I}_0$ is weakly product. We have morphisms (in the derived category) $\phi^{x_1}, \phi^{x_2}:\mathcal O_{\AA\times \AA}\to \mathcal I_0$  and $\delta: \mathcal O_0[-1]\to \mathcal I_0$ described in the complex below:
\[
	\begin{tikzpicture}
		\node (v2) at (4.5,2) {$R^1$};
		\node (v5) at (1,-1) {$R^3$};
		\node (v6) at (-3,-1) {$R^1$};
		\node at (1,3) {0};
		\node (v4) at (4.5,-1) {$R^1$};
		\node at (4.5,3) {1};
		\node at (8,3) {2};
		\node at (-3,3) {-1};
		\node (v12) at (-5.5,2) {$\mathcal O_{\mathbb A^1\times \mathbb A^1}$};
		\node (v11) at (-7.5,-1) {$\mathcal I_0$};
		\draw[->]  (v4) edge node[fill=white, scale=.75]{$\begin{pmatrix}0\\x_1\\-x_2\end{pmatrix}$} (v5);
		\draw[->]  (v5) edge node[fill=white, scale=.75]{$\begin{pmatrix} 1&x_1&x_2\end{pmatrix}$}(v6);
		\draw[->]  (v2) edge node[fill=white, scale=.75]{$\begin{pmatrix}x_1\\-1\\0\end{pmatrix}-\begin{pmatrix}x_2\\0\\-1\end{pmatrix}$}(v5);
		\node (v8) at (1,-3.5) {$R^1$};
		\node (v7) at (4.5,-3.5) {$R^2$};
		\node (v9) at (8,-3.5) {$R^1$};
		\draw  (v7) edge[->] node[fill=white, scale=.75]{$\begin{pmatrix} x_1& x_2\end{pmatrix}$} (v8);
		\draw  (v9) edge[->]  node[fill=white, scale=.75]{$\begin{pmatrix} x_2\\ -x_1\end{pmatrix}$}(v7);
		\draw  (v8) edge[->]  node[fill=white, scale=.75]{$\begin{pmatrix} 1\end{pmatrix}$}(v6);
		\draw  (v7) edge[->] node[fill=white, scale=.75]{$\begin{pmatrix} 0&0\\1&0\\0&1\end{pmatrix}$} (v5);
		\draw  (v9) edge[->] node[fill=white, scale=.75]{$\begin{pmatrix} 1\end{pmatrix}$} (v4);
		\node (v10) at (-5.5,-3.5) {$\mathcal O_0[-1]$};
		\draw  (v10) edge[->] node[fill=white, scale=.75]{$\delta$} (v11);
		\draw  (v12) edge[->] node[fill=white, scale=.75]{$\phi^{x_1}-\phi^{x_2}$} (v11);
		\draw[dashed] (-4,3) -- (-4,-4);
	\end{tikzpicture}
\]
This is the complex exhibited in \cref{fig:resA1b} and agrees with the perturbation of the skeleton from \cref{fig:skelA1b}. By presenting our complex as a free resolution shown on the right, we can use Macaulay2 to verify that this is quasi-isomorphic to the diagonal.
Therefore, we've shown that the diagonal of $\AA^1$ is generated in time 1 by weakly product bimodules, with generation time minimized by the structure sheaf of $\AA^1$, the skyscraper sheaf at the origin, and the ideal sheaf of the origin (in this case isomorphic to the structure sheaf).

\subsection{The singular conic}

\label{exam:conic}
Let $X=\Spec(k[x_1,y_1]/(x_1y_1))$ be the nodal conic, and let $R=k[x_1, y_1, x_2, y_2]/(x_1y_1, x_2y_2)$ be the product.  We resolve the diagonal in $X\times X = \Spec(R)$. To slightly simplify notation, we write $\mathcal{O}_{\AA_x \times \AA_x}$ for the structure sheaf of $\AA_{x_1} \times \AA_{x_2}$ on $X \times X$. Let:
\begin{align*}
	\mathcal O_{\AA_x\times \AA_x}= R^1/(y_1, y_2), \mathcal O_{\AA_y\times \AA_y}=R^1/(x_1, x_2), \mathcal O_0=R^1/(x_1, y_1, x_2, y_2).
\end{align*}
The following complex whose leftmost term is in homological degree $-1$
\[\mathcal O_0\xleftarrow{\begin{pmatrix} 1& 1\end{pmatrix}}\mathcal O_{\AA_x\times \AA_x}\oplus \mathcal O_{\AA_y\times \AA_y} \xleftarrow{\begin{pmatrix}x_1-x_2 & 0\\0 & y_1-y_2\end{pmatrix}} \mathcal O_{\AA_x\times \AA_x}\oplus \mathcal O_{\AA_y\times \AA_y}
\]
is quasi-isomorphic the diagonal. It is the complex exhibited in \cref{fig:nodalConicSlow} from the introduction.
This complex is homotopic to the complex depicted in \cref{fig:resolutionNodalConicMedium}. We now replace the subcomplexes spanned by the target and domain of the red arrows in \cref{fig:resolutionNodalConicMedium} by the objects they represent in the derived category.  Let $\mathcal I^x_0= \cone(\mathcal O_{\AA_x\times \AA_x}\to \mathcal O_0)[-1]$, and similarly let $\mathcal I^y_0= \cone(\mathcal O_{\AA_y\times \AA_y}\to \mathcal O_0)[-1]$; these are weakly product bimodules by \cref{lem:makingWeaklyProduct}. As the map $(x_i): \mathcal O_{\AA_x\times \AA_x}\to \mathcal O_{\AA_x\times \AA_x}$ composes to zero with the red arrow, we obtain a morphism in the derived category $\phi^{x_i}: \mathcal O_{\AA_x\times \AA_x} \to \mathcal I^x_0$; similarly, we have morphisms
\begin{align*}
	\phi^{x_1}, \phi^{x_2}:\mathcal O_{\AA_x\times \AA_x}\to \mathcal I^x_0 &  & \phi^{y_1}, \phi^{y_2}:\mathcal O_{\AA_y\times \AA_y}\to \mathcal I^y_0 \\
	\delta^x: \mathcal O_0[-1]\to \mathcal I^x_0                                &  & \delta^y: \mathcal O_0[-1]\to \mathcal I^y_0.
\end{align*}
\begin{lemma}
	\label{lem:resolutionNodalConic}
The following complex in the derived category
\[\mathcal I^x_0[-1]\oplus \mathcal I^y_0[-1] \xleftarrow{\begin{pmatrix}\phi^{x_1}-\phi^{x_2}&\delta^x& 0 \\ 0 & \delta^y &  \phi^{y_1}-\phi^{y_2}\end{pmatrix}} \mathcal O_{\AA_x\times \AA_x}\oplus \mathcal O_0 \oplus   \mathcal O_{\AA_y\times \AA_y}\]
is a length 1 resolution of $\mathcal O_{\Delta}$ by weakly product bimodules. 
\end{lemma}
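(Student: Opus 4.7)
The plan is to verify two things: (i) each term of the complex is a weakly product bimodule, and (ii) the complex is quasi-isomorphic to $\mathcal O_\Delta$.

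For (i), each of $\mathcal O_{\AA_x \times \AA_x}$, $\mathcal O_{\AA_y \times \AA_y}$, and $\mathcal O_0$ is an external product bimodule (e.g.\ $\mathcal O_0 = \mathcal O_{\{0\}} \boxtimes \mathcal O_{\{0\}}$). By construction, $\mathcal I^x_0$ and $\mathcal I^y_0$ are shifts of mapping cones of morphisms between product bimodules, so \cref{lem:makingWeaklyProduct} implies they are weakly product. Shifts preserve this property, so $\mathcal I^x_0[-1]$ and $\mathcal I^y_0[-1]$ qualify.

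For (ii), the strategy is to unfold the lemma's complex using the defining triangles $\mathcal I^x_0 \to \mathcal O_{\AA_x\times\AA_x} \xrightarrow{\pi_x} \mathcal O_0$ (and the analogous one for $y$), replacing $\mathcal I^x_0[-1]$ and $\mathcal I^y_0[-1]$ by their explicit two-term quasi-isomorphic models. Under these substitutions, each $\phi^{x_i}$ is represented by the chain map given by multiplication by $x_i$ on the $\mathcal O_{\AA_x\times\AA_x}$ factor (valid since $x_i$ acts as zero on $\mathcal O_0$), while $\delta^x$ and $\delta^y$ are represented by the identity on the $\mathcal O_0$ factor. Totalizing produces an explicit twisted complex built only from the product-bimodule pieces $\mathcal O_{\AA_x\times\AA_x}$, $\mathcal O_0$, $\mathcal O_{\AA_y\times\AA_y}$, with differentials consisting of $x_1 - x_2$, $y_1 - y_2$, $\pi_x$, $\pi_y$, and identity edges between copies of $\mathcal O_0$.

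The single $\mathcal O_0$ sitting in the domain is connected to the two $\mathcal O_0$'s arising from unfolding $\mathcal I^x_0[-1]$ and $\mathcal I^y_0[-1]$ via the identity morphisms $\delta^x$ and $\delta^y$. A Gaussian elimination cancelling the domain $\mathcal O_0$ against one of these identity edges reroutes the other into a direct identification of the two unfolded $\mathcal O_0$'s, collapsing them to a single $\mathcal O_0$ term. After this reduction, the resulting three-term twisted complex matches, term-by-term and differential-by-differential, the product-bimodule resolution of $\mathcal O_\Delta$ displayed immediately above the lemma, establishing the desired quasi-isomorphism. As a computational sanity check, one can also unfold the complex into a free $R$-module resolution and verify quasi-isomorphism with $\mathcal O_\Delta$ directly in Macaulay2, paralleling the $\AA^1$ case.

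The main technical obstacle is bookkeeping signs and homological degrees during the unfolding and Gaussian elimination, since $\delta^x$ and $\delta^y$ wire the single domain $\mathcal O_0$ into two different positions of the codomain and the resulting cancellations must produce the correct composed differentials. Structurally, however, the new complex is morally two copies of the $\AA^1$ resolution from the previous subsection glued along the common skyscraper $\mathcal O_0$ at the node, so the bulk of the check is a direct upgrade of the affine-line case.
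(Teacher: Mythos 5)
Your proof is correct and structurally close to the paper's, but the last step is a genuine (and valid) alternative. Both you and the paper begin by unfolding $\mathcal I_0^x[-1]$ and $\mathcal I_0^y[-1]$ into their two-term chain models, which recovers exactly the seven-term product complex of \cref{fig:resolutionNodalConicMedium}, with the domain copy of $\mathcal O_0$ wired by identity edges into the two $\mathcal O_0$'s coming from the cones. At that point the paper simply restricts the seven-term complex to the four affine charts $\AA_x\times\AA_x$, $\AA_x\times\AA_y$, $\AA_y\times\AA_x$, $\AA_y\times\AA_y$ and checks agreement with $\mathcal O_\Delta$ on each; you instead perform a Gaussian elimination, cancelling the domain $\mathcal O_0$ against one identity edge (say $F\to B$ where $B$ is the $\mathcal O_0$ inside $\mathcal I_0^x[-1]$), thereby creating the rerouted edge $\mathcal O_{\AA_x\times\AA_x}\to\mathcal O_0$ and landing on the five-term length-2 product resolution displayed just before the lemma. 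The tradeoff: your route makes the homotopy equivalence to \cref{fig:resolutionNodalConicMedium} and then to the shorter product resolution completely explicit (the paper only asserts both homotopies), but it still leans on the paper's standing assertion that the length-2 product resolution computes $\mathcal O_\Delta$ --- which, somewhere, still needs the chart check the paper performs. One small inaccuracy in your phrasing: the elimination does not literally ``collapse the two unfolded $\mathcal O_0$'s to one'' --- it cancels the domain $\mathcal O_0$ with one of them, leaves the other, and transfers the edge $A\to B$ to a new edge $A\to D$; the net effect is as you describe, but the mechanism is cancellation plus rerouting rather than identification.
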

To make connection with our geometric intuition so far, we have drawn the resolution superimposed on the pair of pants in \cref{fig:resolutionNodalConic}. As suggested by the figure, the pullback of this resolution  to $\AA_x\times \AA_x$ agrees with \cref{fig:resA1b}.
\begin{figure}[h]
	\centering
	\begin{subfigure}{.45\linewidth}
		\centering
		\begin{tikzpicture}

\begin{scope}[decoration={    markings,
    mark=at position 0.5 with {\arrow{>}}}    ] 

\fill[gray!20]  (1.5,-3) rectangle (-4.5,1);

\draw[dashed, postaction={decorate}] (-4.5,1) -- (1.5,1);
\draw[dashed, postaction={decorate}](-4.5,-3) -- (1.5,-3);

\draw[dotted] (-4.5,1) -- (-4.5,-3);

\draw[dotted] (1.5,-3) -- (1.5,1) ;
\clip  (1.5,-3) rectangle (-4.5,1);

\node (v5) at (-4.5,-1.5) {};
\begin{scope}[]
\node (v2) at (-3.5,0.5) {$\mathcal I^x_0$};
\node (v3) at (-3.5,-1.5) {$\mathcal O_{\mathbb A^1_x\times \mathbb A^1_x}$};
\draw   (-3.5,-3) edge (v3);
\draw  (v2) edge[<-] (-3.5,1);
\node[fill=gray!20] at (-3.5,-2.5) {$-\phi^{x_2}$};

\end{scope}

\draw (v3) edge[->] node[fill=gray!20] {$\phi^{x_1}$}(v2);
\node (v6) at (-1.5,0.5) {$\mathcal O_{0}[1]$};

\draw[->]  (v6) edge (v2);
\begin{scope}[shift={(4,0)}]
\node (v2) at (-3.5,0.5) {$\mathcal I^y_0$};
\node (v3) at (-3.5,-1.5) {$\mathcal O_{\mathbb A^1_y\times \mathbb A^1_y}$};
\draw   (-3.5,-3) edge (v3);
\draw  (v2) edge[<-] (-3.5,1);
\node[fill=gray!20] at (-3.5,-2.5) {$-\phi^{y_2}$};

\end{scope}

\draw (v3) edge[->] node[fill=gray!20] {$\phi^{y_1}$}(v2);
\draw[->]  (v6) edge (v2);

\draw[dotted, fill=white]  (-1.5,-2) ellipse (0.5 and 0.5);

\end{scope}
\end{tikzpicture}
 		\caption{A weakly product resolution of the diagonal in the nodal conic}
		\label{fig:resolutionNodalConic}
	\end{subfigure}
	\begin{subfigure}{.45\linewidth}
		\centering
		\begin{tikzpicture}

\begin{scope}[decoration={    markings,
    mark=at position 0.5 with {\arrow{>}}}    ] 

\fill[gray!20]  (1.5,-3) rectangle (-4.5,1);

\draw[dashed, postaction={decorate}] (-4.5,1) -- (1.5,1);
\draw[dashed, postaction={decorate}](-4.5,-3) -- (1.5,-3);

\draw[dotted] (-4.5,1) -- (-4.5,-3);

\draw[dotted] (1.5,-3) -- (1.5,1) ;
\clip  (1.5,-3) rectangle (-4.5,1);

\node (v5) at (-4.5,-1.5) {};
\begin{scope}[]
\node (v2) at (-3.5,0.5) {$\mathcal O_{\mathbb A^1_x\times \mathbb A^1_x}$};
\node (v3) at (-3.5,-1.5) {$\mathcal O_{\mathbb A^1_x\times \mathbb A^1_x}$};
\draw   (-3.5,-3) edge (v3);
\draw  (v2) edge[<-] (-3.5,1);
\node[fill=gray!20] at (-3.5,-2.5) {$-x_2$};

\end{scope}

\draw (v3) edge[->] node[fill=gray!20] {$x_1$}(v2);
\node (v6) at (-2.5,-0.5) {$\mathcal O_{0}$};
\node (v7) at (-1.5,0.5) {$\mathcal O_{0}$};
\node (v8) at (-0.5,-0.5) {$\mathcal O_{0}$};

\draw[<-,red]  (v6) edge (v2);
\begin{scope}[shift={(4,0)}]
\node (v2) at (-3.5,0.5) {$\mathcal O_{\mathbb A^1_y\times \mathbb A^1_y}$};
\node (v3) at (-3.5,-1.5) {$\mathcal O_{\mathbb A^1_y\times \mathbb A^1_y}$};
\draw   (-3.5,-3) edge (v3);
\draw  (v2) edge[<-] (-3.5,1);
\node[fill=gray!20] at (-3.5,-2.5) {$-y_2$};

\end{scope}

\draw (v3) edge[->] node[fill=gray!20] {$y_1$}(v2);
\draw[<-,red]  (v8) edge (v2);

\draw[dotted, fill=white]  (-1.5,-2) ellipse (0.5 and 0.5);

\end{scope}
\draw[->]  (v7) edge (v6);
\draw[->]  (v7) edge (v8);
\end{tikzpicture}
 		\caption{A less efficient resolution of the diagonal in the nodal conic.}
		\label{fig:resolutionNodalConicMedium}
	\end{subfigure}
	\caption{Resolutions of the diagonal on $X\times X$ drawn on a pair of pants homologically mirror to $X = V(xy)$.}
\end{figure}
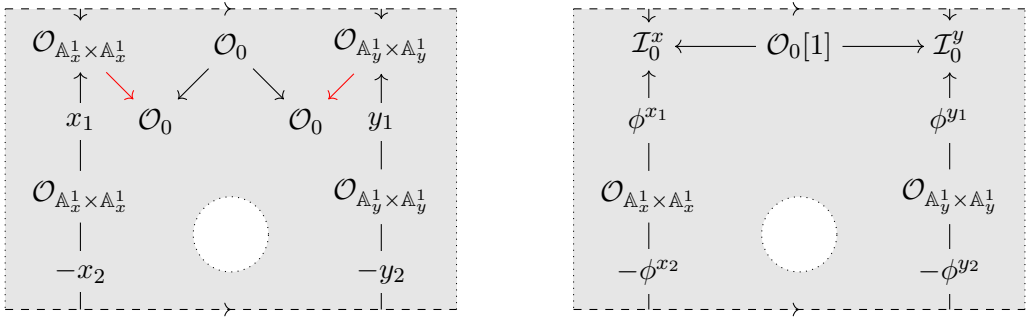
\begin{proof}[Proof of \cref{lem:resolutionNodalConic}]
The complex is homotopic to \cref{fig:resolutionNodalConicMedium}, whose pullback $\AA_x\times \AA_x, \AA_x\times \AA_y, \AA_y\times \AA_x$, and $\AA_y\times \AA_y$ resolves the restriction of $\mathcal O_{\Delta}$ to each of those four charts.
\end{proof}
\subsection{The cycle of projective lines}
Consider projective lines $ \PP^1_j$ for $j=1, \ldots, n$ with homogeneous coordinates $[x_{j,-}: x_{j,+}]$ and let $p_{j,-}, p_{j,+}\in \PP^1_j$ correspond to the points $0, \infty$. Consider the curve \[I_n=\left(\bigcup_{i=1}^n \PP^1_j \right)/(p_{i,-}\sim p_{i,+}),\] where all indices are taken cyclicly modulo $n$. Let $\mathcal O_{p_j}$ be the skyscraper sheaf at the point $p_j=((p_{j,+})_1,(p_{j,+})_2)\in I_n\times I_n$.
The space  $\hom_{\PP_j\times \PP_j}(\mathcal O_{\PP_j\times \PP_j}(-1,-1), \mathcal O_{\PP_j\times \PP_j})$ is generated by monomial sections $\{(x_{j,+})_1,(x_{j,+})_2,(x_{j,-})_1,(x_{j,-})_1\}$.
 By abuse of notation, denote by  $\mathcal O_{\PP_j^1\times \PP_j^1}(d_1, d_2)$ the sheaf $(i_j)_*\mathcal O_{\PP_j^1}(d_1)\boxtimes  (i_j)_*\mathcal O_{\PP_j^1}(d_2)$, where $i_j: \PP^1_j\to I_n$. From this, define the sheaves
\[\mathcal F_j = (i_j)_*\cone(\mathcal O_{\PP_j^1\times \PP_j^1}\to  \mathcal O_{p_j})[-1]\]
\[\mathcal G_j = (i_j)_*\cone(\mathcal O_{\PP_j^1\times \PP_j^1}\to \mathcal O_{p_{j-1}})[-1]\]
Define the maps 
\begin{align*}
	\phi^{(x_{j,+})_1},\phi^{(x_{j,+})_2}: \mathcal O_{\PP_j^1\times \PP_j^1}(-1, -1)\to \mathcal F_j && \phi^{(x_{j,+})_2},\phi^{(x_{j,-})_1}: \mathcal O_{\PP_j^1\times \PP_j^1}(-1, -1)\to \mathcal G_j \\
	\delta_j^+:\mathcal O_{p_j}[1]\to \mathcal G_{j+1} && \delta_j^-:\mathcal O_{p_j}[1]\to \mathcal G_{j}
\end{align*}
as before. 
\begin{thm}
\label{thm:resIn}
The following complex in the derived category
\[
\bigoplus_{j=1}^n \mathcal F_k\oplus \mathcal G_k \xleftarrow{(A_{jk})}
\bigoplus_{k=1}^n\mathcal  \mathcal O_{\PP^1_j\times \mathbb P^1_j}(-1, -1)^{\oplus 2}\oplus \mathcal O_{p_j}[1]
\]
where
\[
A_{jk}=\left\{\begin{array}{cc}
	\begin{pmatrix}
	\phi^{(x_{j,+})_1} & - \phi^{(x_{j,+})_2} & \delta_j^+\\
	-\phi^{(x_{j,-})_1} &  \phi^{(x_{j,-})_2}& 0 
	\end{pmatrix} & \text{if $j=k$}\\
	\begin{pmatrix}
	0 & 0 & 0\\
	0 & 0 & \delta_j^- 
	\end{pmatrix} & \text{if $j=k+1$}\\
	0 & \text{otherwise}
\end{array}
\right.
\]
is a length 1 resolution of $\mathcal O_{\Delta}$ by weakly product bimodules. 
\end{thm}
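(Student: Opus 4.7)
The weakly product claim follows immediately from Lemma \ref{lem:makingWeaklyProduct}: each $\mathcal F_j$ and $\mathcal G_j$ is, by construction, a shift of the cone of a morphism $\mathcal O_{\PP^1_j \times \PP^1_j}(-1,-1) \to \mathcal O_{p_j}$ of product bimodules on $\PP^1_j \times \PP^1_j$ (pushed forward to $I_n \times I_n$ via $i_j \times i_j$), while $\mathcal O_{\PP^1_j \times \PP^1_j}(-1,-1)$ and $\mathcal O_{p_j}$ are themselves product bimodules by inspection. The substance of the theorem is therefore to verify that the displayed complex is quasi-isomorphic to $\mathcal O_\Delta$.

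I would verify this by an affine cover argument in the spirit of the proof of Lemma \ref{lem:resolutionNodalConic}. Choose a cover of $I_n$ by $n$ nodal affine opens $V_j$, where each $V_j$ is a neighborhood of the node $p_j$ isomorphic to $\Spec k[x,y]/(xy)$ and meets the two adjacent components $\PP^1_j$ and $\PP^1_{j+1}$. This produces an affine cover $\{V_j \times V_k\}$ of $I_n \times I_n$, and since quasi-isomorphism is a Zariski-local property it suffices to check it chart-by-chart.

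The verification naturally splits by the relative position of $j$ and $k$. On a diagonal chart $V_j \times V_j$, I expect the restricted complex to reproduce the nodal conic resolution of Lemma \ref{lem:resolutionNodalConic}, with the two branches of the nodal conic corresponding to the two components of $I_n$ adjacent to $p_j$. On a chart $V_j \times V_k$ where the two nodal neighborhoods share exactly one component (i.e., $|j - k| \equiv 1$ modulo $n$), the restriction of $\mathcal O_\Delta$ is supported on a single $\AA^1$-piece, and the restricted complex should reduce to the resolution depicted in \cref{fig:resA1b}. On all other off-diagonal charts, both the complex and $\mathcal O_\Delta$ vanish.

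The main obstacle is the bookkeeping for the diagonal chart $V_j \times V_j$. Contributions come from the two diagonal blocks $A_{jj}$ and $A_{j+1, j+1}$ (one per component through $p_j$), together with the off-diagonal block containing $\delta_j^-$, which links $\mathcal O_{p_j}$ on the source side to the ideal object $\mathcal G$ on the target side of the adjacent component. The key computation is to check that the restrictions of these blocks assemble to exactly the weakly product resolution of Lemma \ref{lem:resolutionNodalConic} after identifying the two $\AA^1$-branches of $V_j$ with the $x$- and $y$-axes of the nodal conic. Once chart-wise quasi-isomorphism is established, the complex is a length-$1$ weakly product resolution of $\mathcal O_\Delta$, and Theorem \ref{thm:weakProductToRdim} yields $\Rdim(D^b\Coh(I_n)) \leq 1$, with an explicit Rouquier generator read off from the target entries of the complex.
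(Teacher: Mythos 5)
Your overall approach coincides with the paper's: apply \cref{lem:makingWeaklyProduct} to see that each $\mathcal F_j$ and $\mathcal G_j$ is weakly product, then verify the quasi-isomorphism with $\mathcal O_\Delta$ chart-by-chart using the cover of $I_n$ by nodal affine opens (your $V_j$ is precisely the paper's $Y_j$). Your diagonal-chart and distant-chart analyses are correct and match the paper.

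However, your description of the adjacent chart $V_j \times V_{j+1}$ is incorrect, and this step of your verification would fail as written. You claim the restriction ``should reduce to the resolution depicted in \cref{fig:resA1b},'' but that complex contains the skyscraper summand $\mathcal O_0[1]$, whereas on $V_j \times V_{j+1}$ every skyscraper appearing in the theorem's complex restricts to zero: the only node lying in $V_j$ is $p_j$ and the only node in $V_{j+1}$ is $p_{j+1}$, so no diagonal point $(p_k,p_k)$ belongs to $V_j\times V_{j+1}$. After these vanishings, what survives is the two-term complex $\mathcal O_{\PP^1_{j+1}\times\PP^1_{j+1}}(-1,-1)^{\oplus 2}\to \mathcal F_{j+1}\oplus\mathcal G_{j+1}$, with both target summands themselves trivializing to line bundles; in coordinates on the relevant $\AA^1\times\AA^1\subset \PP^1_{j+1}\times\PP^1_{j+1}$ the differential is a $2\times 2$ matrix of multiplications by coordinate sections whose determinant cuts out $V_j\cap V_{j+1}\cong\GG_m$. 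The correct statement, as in the paper, is that the restriction is a resolution of the diagonal of the algebraic torus sitting inside $\PP^1_{j+1}\times\PP^1_{j+1}$, not the complex of \cref{fig:resA1b}; your proposed identification would not check out.
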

To connect with our intuition from symplectic geometry, we've drawn the complex superimposed on the $n$-punctured torus in \cref{fig:resIn}
\begin{figure}[h]
\centering
\begin{tikzpicture}

\begin{scope}[shift={(1,-6.5)}]

\fill[gray!20] (3.5,2) -- (3.55,-0.375) -- (3.425,-0.5) -- (3.675,-0.625) -- (3.55,-0.75) -- (3.55,-4.875) -- (3.425,-5) -- (3.675,-5.125) -- (3.55,-5.25) -- (3.55,-6)-- (7.75,-6) -- (7.75,2) -- (3.5,2);

\clip (3.5,2) -- (3.55,-0.375) -- (3.425,-0.5) -- (3.675,-0.625) -- (3.55,-0.75) -- (3.55,-4.875) -- (3.425,-5) -- (3.675,-5.125) -- (3.55,-5.25) -- (3.55,-6)-- (7.75,-6) -- (7.75,2) -- (3.55,2);

\begin{scope}[shift={(8.5,0)}]

\def\int{n}

\node (v2) at (-3.5,1.5) {$\mathcal F_\int$};
\node (v3) at (-3.5,-4.5) {$\mathcal O_{\mathbb P^1_\int\times \mathbb P^1_\int}(-1,-1)$};
\draw   (-3.5,-6) edge (v3);
\draw  (v2) edge[<-] (-3.5,2);

\node (v6) at (-1.35,-0.5) {$\mathcal O_{p_\int}[1]$};

\draw[->]  (v6) edge node [fill=gray!20] {$\delta_\int^-$} (v2);

\node (v8) at (0.5,-2) {};
\draw[->]  (v6) edge node [fill=gray!20] {$\delta_\int^+$} (v8);

\draw[dotted, fill=white]  (-1.5,-5) ellipse (0.5 and 0.5);

\node (v4) at (-5.5,-1) {};

\node (v7) at (-3.5,-0.5) {$\mathcal O_{\mathbb P^1_\int\times \mathbb P^1_\int}(-1, -1)$};
\node (v5) at (-3.5,-2.5) {$\mathcal G_\int$};
\draw[->]  (v3) edge (v5);

\draw[->]  (v7) edge (v5);

\draw[->]  (v7) edge (v2);
\draw  (v4) edge[->] (v5);

\end{scope}

\end{scope}

\node at (4,-9) {$\cdots$};

\begin{scope}[decoration={    markings,
    mark=at position 0.5 with {\arrow{>}}}    , shift={(0,-6.5)}]

\fill[gray!20] (3,2) -- (3,0.125) -- (2.875,0) -- (3.125,-0.125) -- (3,-0.25) -- (3,-4.875) -- (2.875,-5) -- (3.125,-5.125) -- (3,-5.25) -- (3,-6)-- (-5.25,-6) -- (-5.25,2) -- (3,2);

\draw[dashed, postaction={decorate}] (-5.25,2) -- (8.75,2);
\draw[dashed, postaction={decorate}](-5.25,-6) -- (8.75,-6);

\draw[dashed, postaction={decorate}] (-5.25,2) -- (-5.25,-6);
\draw[dashed, postaction={decorate}] (8.75,2) -- (8.75,-6);

\clip (3,2) -- (3,0.125) -- (2.875,0) -- (3.125,-0.125) -- (3,-0.25) -- (3,-4.875) -- (2.875,-5) -- (3.125,-5.125) -- (3,-5.25) -- (3,-6)-- (-5.25,-6) -- (-5.25,2) -- (3,2);

\begin{scope}[shift={(-0.5,0)}]

\def\int{1}

\node (v2) at (-3.5,1.5) {$\mathcal F_\int$};
\node (v3) at (-3.5,-4.5) {$\mathcal O_{\mathbb P^1_\int\times \mathbb P^1_\int}(-1,-1)$};
\draw   (-3.5,-6) edge (v3);
\draw  (v2) edge[<-] (-3.5,2);

\node (v6) at (-1.5,-0.5) {$\mathcal O_{p_\int}[1]$};

\draw[->]  (v6) edge node [fill=gray!20] {$\delta_\int^-$} (v2);

\draw[->]  (v6) edge node [fill=gray!20] {$\delta_\int^-$} (v2);

\draw[dotted, fill=white]  (-1.5,-5) ellipse (0.5 and 0.5);

\node (v1) at (-5,-1) {};

\node (v7) at (-3.5,-0.5) {$\mathcal O_{\mathbb P^1_\int\times \mathbb P^1_\int}(-1, -1)$};
\node (v5) at (-3.5,-2.5) {$\mathcal G_\int$};
\draw[->]  (v3) edge (v5);
\draw[->]  (v7) edge (v5);

\draw[->]  (v7) edge (v2);

\draw[->]  (v1) edge (v5);
\end{scope}

\begin{scope}[shift={(3.5,0)}]

\def\int{2}

\node (v2) at (-3.5,1.5) {$\mathcal F_\int$};
\node (v3) at (-3.5,-4.5) {$\mathcal O_{\mathbb P^1_\int\times \mathbb P^1_\int}(-1,-1)$};
\draw   (-3.5,-6) edge (v3);
\draw  (v2) edge[<-] (-3.5,2);

\node (v6) at (-1.3,-0.5) {$\mathcal O_{p_\int}[1]$};

\draw[->]  (v6) edge node [fill=gray!20] {$\delta_\int^-$} (v2);

\draw[->]  (v6) edge node [fill=gray!20] {$\delta_\int^+$} (v2);

\draw[dotted, fill=white]  (-1.5,-5) ellipse (0.5 and 0.5);

\node (v7) at (-3.5,-0.5) {$\mathcal O_{\mathbb P^1_\int\times \mathbb P^1_\int}(-1, -1)$};
\node (v5) at (-3.5,-2.5) {$\mathcal G_\int$};
\draw[->]  (v3) edge (v5);
\draw[->]  (v7) edge (v5);

\draw[->]  (v7) edge (v2);
\node (v9) at (0.5,-2) {};
\draw  (v6) edge (v9);
\end{scope}

\end{scope}

\draw[->] (-1.65,-7.35) -- node[fill=gray!20] {$\delta^+_1$}(-0.45,-8.8);

\end{tikzpicture} \caption{Resolution of the diagonal on $I_n\times I_n$.}
\label{fig:resIn}
\end{figure}
where we identify the left/right and top/bottom sides. 
\begin{proof}[Proof of \cref{thm:resIn}]
We now prove that this resolves the diagonal.
$I_n$ is covered by nodal conics $Y_i=I_n\setminus \left(\bigcup_{j\neq i, i+1} \PP^1_i\right)$. Therefore, $I_n\times I_n$ is covered by charts of the form $f_{ij}: Y_i\times Y_j\to I_n\times I_n$. We show that the pullback of our chain complex to $Y_i\times Y_j$ is quasi-isomorphic to the diagonal restricted to that chart.
\begin{itemize}
	\item When $i=j$, the pullback of our chain complex to  $Y_i\times Y_i$ is homotopic to the one given in \cref{lem:resolutionNodalConic}.
	\item On charts $Y_i\times Y_j$ where the distance between $i,j$ is greater than $1$, the pullback vanishes (as no sheaf in our resolution has support on these charts). These charts are disjoint from the diagonal, so we have agreement.
	\item On charts $Y_i\times Y_{i+1}$, the restriction of our resolution is a resolution of diagonal on the algebraic torus (which sits inside the $\PP_{i+1}^1\times \PP_{i+1}^1\subset Y_i\times Y_{i+1})$.
\end{itemize}
In conclusion, the diagonal of $D^b\Coh(I_n)$ has time one generation by weakly product bimodules. 
\end{proof}
\begin{rem} It is known that the diagonal dimension of a smooth elliptic curve is two by \cite{olander2023diagonal}. As there are degenerations of elliptic curves with central fiber $I_n$, one expects from the conjectured semi-continuity of diagonal dimension in \cite{elagin2021three} that $\Ddim(D^b\Coh(I_n)) = 2$ as well explaining why it was necessary to take a weak diagonal resolution.
\end{rem}

\printbibliography

\end{document}